\newtheorem{theorem}{Theorem}
\newtheorem{lemma}[theorem]{Lemma}
\theoremstyle{definition}
\newtheorem{definition}[theorem]{Definition}
\newcommand{\scrB}{\mathcal{B}}
\newcommand{\scrP}{\mathcal{P}}
\newcommand{\FF}{\mathbb{F}}
\newcommand{\PP}{\mathbb{P}}
\title{New Strongly Regular Graphs from Finite Geometries via Switching}
\author{Ferdinand Ihringer\thanks{Department of Mathematics: Analysis, Logic and Discrete Mathematics, Ghent University, Belgium, \href{mailto:ferdinand.ihringer@ugent.be}{ferdinand.ihringer@ugent.be} . The author is supported by a postdoctoral fellowship of the Research Foundation --- Flanders (FWO).},
Akihiro Munemasa\thanks{Graduate School of Information Sciences, Tohoku University, Sendai, Japan,
\href{mailto:munemasa@math.is.tohoku.ac.jp}{munemasa@math.is.tohoku.ac.jp} .}
}
\begin{document}

\maketitle

\begin{abstract}
  We show that the strongly regular graph on non-isotropic points of one type of the polar
  spaces of type $U(n, 2)$, $O(n, 3)$, $O(n, 5)$, $O^+(n, 3)$, and $O^-(n, 3)$
  are not determined by its parameters for $n \geq 6$.
  We prove this by using a variation of Godsil-McKay switching
  recently described by Wang, Qiu, and Hu. This also results
  in a new, shorter proof of a previous result of the first author which
  showed that the collinearity graph of a polar space is not determined by
  its spectrum. The same switching gives a linear algebra explanation
  for the construction of a large number of non-isomorphic designs.
\end{abstract}

\section{Introduction}

Two graphs $\Gamma$ and $\overline{\Gamma}$ are cospectral if their adjacency matrices $A$ and $\overline{A}$ are cospectral,
that is there exists an orthogonal matrix $Q$ with $Q^T A Q = \overline{A}$.
In 1982 Godsil and McKay described a possible choice for $Q$ which 
has an easy combinatorial description \cite{Godsil1982} -- nowadays known as Godsil--McKay switching --
and proved to be very useful in constructing cospectral graphs \cite{Abiad2018,Cioaba2018,Dam2005,Dam2006,Haemers2010,Kubota2017,Munemasa2015}.
Godsil--McKay switching can be described as follows. Here we denote the 
neighborhood of a vertex $x$ in a graph $\Gamma$ by $\Gamma(x)$.

\begin{theorem}[GM switching]\label{thm:gm_gen_t}
  Let $\Gamma$ be a graph whose vertex set is partitioned as $C_1 \cup \dots \cup C_t \cup D$.
Assume that $C:=C_1 \cup \dots \cup C_t$ is an equitable partition of the induced subgraph on $C$,
and that $|\Gamma(x) \cap C_i|/|C_i| \in \{ 0, \frac{1}{2}, 1 \}$ for all $x \in D$ and all $i \in \{1,\ldots, t \}$.
Construct a graph $\overline{\Gamma}$ from $\Gamma$
by modifying edges between $C_i$ and $D$ as follows:
\[\overline{\Gamma}(x)\cap C_i=\begin{cases}
C_i \setminus \Gamma(x) &\text{if $|\Gamma(x)\cap C_i|/|C_i|=\frac{1}{2}$,}\\
\Gamma(x)\cap C_i&\text{otherwise,}
\end{cases}\]
for $x\in D$.
Then $\overline{\Gamma}$ is cospectral with $\Gamma$.
\end{theorem}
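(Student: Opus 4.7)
\medskip

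\noindent\textbf{Proof plan.} The strategy is the standard one: exhibit an explicit orthogonal $Q$ with $Q^{T} A Q = \overline{A}$. For each $i \in \{1,\ldots,t\}$, set
\[
Q_i \;=\; \frac{2}{|C_i|}\, J_{C_i} \;-\; I_{C_i},
\]
where $J_{C_i}$ is the all-ones matrix indexed by $C_i$. Let $Q$ be the block-diagonal matrix equal to $Q_i$ on each $C_i$ and to the identity on $D$. A short computation shows each $Q_i$ is symmetric and $Q_i^{2}=I$, so $Q$ is an orthogonal involution; moreover, $Q_i$ fixes $\mathbf{1}_{C_i}$ and acts as $-I$ on its orthogonal complement. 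The remaining task is to verify that $Q^{T} A Q$ coincides with $\overline{A}$ on each of the three block types $C_i\times C_j$, $C_i\times D$, and $D\times D$.

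The $D\times D$ block is trivial, and the $C_i\times D$ block is the content of the switching rule: the $x$-th column of $A$ restricted to rows $C_i$ is the indicator vector $v_{i,x}$ of $\Gamma(x)\cap C_i$, and a direct calculation gives $Q_i v_{i,x} = \tfrac{2|\Gamma(x)\cap C_i|}{|C_i|}\mathbf{1}_{C_i} - v_{i,x}$. The three admissible values $0,\tfrac12,1$ of $|\Gamma(x)\cap C_i|/|C_i|$ return, respectively, $v_{i,x}$, the indicator of the complement $C_i\setminus\Gamma(x)$, and $v_{i,x}$, matching the prescription for $\overline{\Gamma}(x)\cap C_i$ exactly.

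The step that requires the equitability hypothesis is the $C_i\times C_j$ block. Writing $A_{ij}$ for this block and $b_{ij}$ for the (constant) number of neighbours in $C_j$ of a vertex in $C_i$, equitability gives $A_{ij}\mathbf{1}_{C_j}=b_{ij}\mathbf{1}_{C_i}$, and the double counting identity $b_{ij}|C_i|=b_{ji}|C_j|$ follows from symmetry of $A$. Expanding
\[
Q_i A_{ij} Q_j \;=\; \tfrac{4}{|C_i||C_j|} J_{C_i} A_{ij} J_{C_j} \;-\; \tfrac{2}{|C_i|} J_{C_i} A_{ij} \;-\; \tfrac{2}{|C_j|} A_{ij} J_{C_j} \;+\; A_{ij},
\]
each of the first three terms reduces to a scalar multiple of $\mathbf{1}_{C_i}\mathbf{1}_{C_j}^{T}$ via the equitable relations, and the scalars cancel exactly because $b_{ij}|C_i|=b_{ji}|C_j|$. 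Thus $Q_i A_{ij} Q_j=A_{ij}$, so the induced subgraph on $C$ is unchanged, as required.

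The only genuine obstacle is the algebraic cancellation in the last step; once equitability is translated into the identity $A_{ij}\mathbf{1}_{C_j}=b_{ij}\mathbf{1}_{C_i}$ together with $b_{ij}|C_i|=b_{ji}|C_j|$, the three cross terms collapse and everything else is bookkeeping. Combining the three block verifications yields $Q^{T}AQ=\overline{A}$, and since $Q$ is orthogonal, $\Gamma$ and $\overline{\Gamma}$ are cospectral.
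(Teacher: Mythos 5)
Your proof is correct and is exactly the standard argument: the paper states this theorem without proof, citing Godsil--McKay, and their proof is precisely the conjugation by the block-diagonal orthogonal involution with blocks $\tfrac{2}{|C_i|}J-I$ that you construct. All three block verifications, including the cancellation $\tfrac{b_{ij}}{|C_j|}=\tfrac{b_{ji}}{|C_i|}$ in the $C_i\times C_j$ case, check out.
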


Abiad and Haemers used Godsil--McKay switching \cite{Godsil1982} in \cite{Abiad2016} to obtain graphs
cospectral to the collinearity graphs of the symplectic polar space of rank at least $3$
over $\FF_2$. Barwick et al.\ generalized this, also using Godsil--McKay switching, 
to quadrics of rank at least $3$ over $\FF_2$ \cite{Barwick2016}.
Using different techniques, Kantor showed in \cite{Kantor1982a} that if a polar space has a spread, then one can define a
new (not necessarily non-isomorphic) strongly regular graph with the parameters as a finite classical
polar space $\PP$ over $\FF_q$ if $\PP$ possesses a spread of maximals.
For instance this implies nowadays that the strongly regular collinearity graphs of symplectic polar spaces
are not uniquely defined by their spectrum for sufficiently large $q$.
In \cite{Ihringer2016b} the first author generalized all of the results obtained by switching to all finite classical 
polar spaces of rank at least $3$ over $\FF_q$ by using purely geometrical arguments. 

This note provides an algebraic explanation for the construction given in \cite{Ihringer2016b}
by providing a specific family of orthogonal matrices $Q$. 
This gives rise to the following new switching which was recently discovered -- in a slightly more general form -- by Wang, Qiu, and Hu \cite{Wang2019}.
We discovered the same switching independently while looking for a linear algebra argument for the results
by the first author in \cite{Ihringer2016b}.

\begin{theorem}[WQH switching]\label{thm:2}
 Let $\Gamma$ be a graph whose vertex set is partitioned as $C_1 \cup C_2 \cup D$. 
 Assume that $|C_1|=|C_2|$ and that the induced subgraphs on $C_1$, $C_2$, and $C_1 \cup C_2$
 are regular, where the degrees in the induced subgraphs on $C_1$ and $C_2$ are the same.
Suppose that all $x \in D$ satisfy one of the following:
\begin{enumerate}
 \item $|\Gamma(x) \cap C_1| = |\Gamma(x) \cap C_2|$, or
 \item $\Gamma(x) \cap (C_1 \cup C_2) \in \{C_1, C_2\}$.
\end{enumerate}
Construct a graph $\overline{\Gamma}$ from $\Gamma$ by modifying the
edges between $C_1 \cup C_2$ and $D$ as follows:
\begin{align*}
 \overline{\Gamma}(x) \cap (C_1 \cup C_2) = 
 \begin{cases}
      C_1 & \text{ if } \Gamma(x) \cap (C_1 \cup C_2) = C_2,\\
      C_2 & \text{ if } \Gamma(x) \cap (C_1 \cup C_2) = C_1,\\
      \Gamma(x) \cap (C_1 \cup C_2) & \text{ otherwise,}
 \end{cases}
\end{align*}
for $x \in D$. Then $\overline{\Gamma}$ is cospectral with $\Gamma$.
\end{theorem}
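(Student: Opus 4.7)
The plan is to exhibit an explicit orthogonal matrix $Q$ with $Q^T A Q = \overline A$, where $A, \overline A$ are the adjacency matrices of $\Gamma, \overline\Gamma$. Ordering the vertices as $C_1, C_2, D$, I would write
\[
A = \begin{pmatrix} \tilde A & E \\ E^T & F \end{pmatrix},
\quad
\tilde A = \begin{pmatrix} A_1 & B \\ B^T & A_2 \end{pmatrix},
\quad
E = \begin{pmatrix} E_1 \\ E_2 \end{pmatrix},
\]
so that $\tilde A$ is the adjacency of the induced subgraph on $C := C_1 \cup C_2$. The switching leaves $\tilde A$ unchanged and only alters the columns of $E$ indexed by the vertices of $D$ in case (2), replacing each such column by the swap of its $C_1$- and $C_2$-halves. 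I would take $Q = Q_0 \oplus I_D$ with $Q_0$ a suitable orthogonal involution on the $C$-coordinates; then $Q^T A Q = \overline A$ reduces to verifying the two identities $Q_0 \tilde A Q_0 = \tilde A$ and $Q_0 E = \overline E$.

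Set $c := |C_1| = |C_2|$ and let $j_i$ denote the all-ones vector on $C_i$. The natural candidate is the Householder reflection
\[
Q_0 := I - \tfrac{1}{c} r r^T, \qquad r := \begin{pmatrix} j_1 \\ -j_2 \end{pmatrix}.
\]
Since $r^T r = 2c$, the matrix $Q_0$ is symmetric, orthogonal, and involutive. A direct calculation shows that $Q_0$ swaps $u := \binom{j_1}{0}$ with $w := \binom{0}{j_2}$ and fixes every vector in $r^\perp$.

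The conceptual core is the identity $Q_0 \tilde A Q_0 = \tilde A$, which for a reflection is equivalent to $r$ being an eigenvector of $\tilde A$. This is precisely where the regularity hypotheses enter. Writing $k_1$ for the common valency of $A_1$ and $A_2$ and $k$ for the valency of $\tilde A$, one has $A_i j_i = k_1 j_i$, $B j_2 = (k-k_1) j_1$, and $B^T j_1 = (k-k_1) j_2$, and hence
\[
\tilde A r = \begin{pmatrix} A_1 j_1 - B j_2 \\ B^T j_1 - A_2 j_2 \end{pmatrix} = (2k_1 - k)\, r.
\]
I expect this short eigenvector computation to be the main conceptual step: the remainder is bookkeeping, but this is the point at which the structural hypotheses become visible.

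The remaining verification $Q_0 E = \overline E$ is then immediate, column by column. For $x \in D$ in case (1), the corresponding column of $E$ satisfies $r^T E[\cdot, x] = |\Gamma(x) \cap C_1| - |\Gamma(x) \cap C_2| = 0$, so $Q_0$ fixes it, matching the prescription that such columns are unchanged. For $x \in D$ in case (2), the column is either $u$ or $w$, and $Q_0$ swaps them, exactly as required by the switching rule. Combining these yields $Q^T A Q = \overline A$, and since $Q$ is orthogonal, $\Gamma$ and $\overline\Gamma$ are cospectral.
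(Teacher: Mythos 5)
Your proof is correct and is exactly the argument the paper has in mind (the paper itself defers the proof to Wang--Qiu--Hu, but its stated purpose is ``providing a specific family of orthogonal matrices $Q$,'' and your $Q = Q_0 \oplus I_D$ with $Q_0 = I - \tfrac{1}{c}rr^T$ is that matrix). The eigenvector computation $\tilde A r = (2k_1-k)r$ and the column-by-column check of $Q_0E=\overline E$ are precisely where the hypotheses are used, so nothing is missing.
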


Notice that we simplified the statement from \cite{Wang2019} where the stated conditions are also necessary.
In our case, the conditions are just sufficient.
One can show that if $t=1$ and $|C_1|=4$ in Theorem \ref{thm:gm_gen_t}, then it is equivalent to
$|C_1| = |C_2| = 2$ in Theorem \ref{thm:2}.

In the later sections we show that WQH switching indeed yields strongly regular graphs 
with the same parameters as the collinearity graphs of polar spaces, thus reproducing
the results from \cite{Ihringer2016b} with a much shorter proof.
We start by applying WQH switching to graphs coming from designs to reproduce a classical construction
for non-isomorphic $2$-designs due to Jungnickel and Tonchev \cite{Jungnickel1984,Jungnickel2010}
(see also Shrikhande \cite{Shrikhande1953}), using the linear algebra method.
Furthermore, we use the switching to show that all the strongly regular collinearity graphs coming from one type of non-isotropic points
in the ambient space of the polar spaces $U(n, 2)$, $O(n, 3)$, $O(n, 5)$, $O^+(n, 3)$, and $O^-(n, 3)$
are not uniquely determined by their parameters for $n \geq 6$.
This extends work by Hui and Rodrigues who showed that the strongly regular collinearity graph 
on non-isotropic points of $O^+(n, 2)$ and $O^-(n, 2)$ are not uniquely determined 
by their parameters for $n \geq 6$ \cite{Hui2016}.

\begin{definition}
 If we can apply Theorem~\ref{thm:2} to a graph $\Gamma$ for a pair $\{ C_1, C_2 \}$,
 then we call $\{ C_1, C_2 \}$ a \textit{switching set} of $\Gamma$.
 We refer to $\overline{\Gamma}$ (as in Theorem~\ref{thm:2}) as the graph \textit{obtained by switching}.
\end{definition}

\section{Block Designs and Grassmann Graphs}

A $2$-$(v, b, \lambda)$ design $(\scrP, \scrB)$ consists of a family $\scrB$ (blocks) of $b$-sets of an
$v$-element set $\scrP$ (points) such that every pair of points lies in exactly $\lambda$ blocks.
All $k$-spaces of an $n$-dimensional vector space over $\FF_q^n$ form a $2$-design. Jungnickel \cite{Jungnickel1984}
constructed many non-isomorphic $2$-designs with the same parameters.
We reproduce a special case of his construction by applying WQH switching.
For $\lambda=1$, the \textit{block graph} of $(\scrP, \scrB)$ is the graph with $\scrB$ as vertices where two vertices are adjacent if
the blocks intersect.

  Let $(\scrP, \scrB)$ be a $2$-$(v, b, 1)$ design with a $2$-$(v', b, 1)$ subdesign $(\scrP', \scrB')$.
  Let $p_1, p_2 \in \scrP'$. Let $C_1$ be the set of all blocks of $\scrB'$ containing $p_1$ and not $p_2$,
  and let $C_2$ be the set of all blocks of $\scrB'$ containing $p_2$ and not $p_1$.
  
\begin{theorem}\label{thm:switching_design}
  The pair $\{ C_1, C_2 \}$ is a switching set of the block graph $\Gamma$ of $(\scrP, \scrB)$.
\end{theorem}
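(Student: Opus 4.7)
The plan is to verify the three hypotheses of Theorem~\ref{thm:2} for $\{C_1, C_2\}$ in turn, using throughout the Steiner-system property that any two points of $\scrP'$ are joined by a unique block of $\scrB'$; in particular there is a unique $B^* \in \scrB'$ containing both $p_1$ and $p_2$. If $r'$ denotes the common replication number of $\scrB'$, then $|C_1| = |C_2| = r' - 1$ is immediate, since exactly one of the $r'$ blocks of $\scrB'$ through $p_i$ also contains the other point $p_{3-i}$.

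Any two blocks in $C_1$ share the point $p_1$, so the induced subgraph on $C_1$ is the complete graph $K_{r'-1}$, and similarly for $C_2$. To count edges between $C_1$ and $C_2$, I would take $B \in C_1$ and, for each $q \in B \setminus \{p_1\}$, form the unique block $B_q \in \scrB'$ through $q$ and $p_2$; a quick check shows $B_q \ne B^*$ (otherwise $q \in B \cap B^* = \{p_1\}$, contrary to $q \ne p_1$), so $B_q \in C_2$, and the $b-1$ resulting blocks are distinct because any coincidence would force two shared points of $\scrP'$ and hence equality with $B$, which does not contain $p_2$. The symmetric count from the $C_2$ side gives regularity of $C_1 \cup C_2$ with common degree $r' + b - 3$.

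The heart of the argument is the dichotomy for $x \in D$. I would first isolate the subdesign lemma: if $x \in \scrB \setminus \scrB'$, then $|x \cap \scrP'| \leq 1$, because two points of $x \cap \scrP'$ determine a unique block of $\scrB$ which, being already a block of $\scrB' \subseteq \scrB$, would equal $x$. With this lemma the cases become routine. When $x = B^*$, every block of $C_1 \cup C_2$ meets $x$ at $p_1$ or $p_2$, so both intersection counts equal $r' - 1$ and condition~(i) holds. When $x$ is a block of $\scrB'$ disjoint from $\{p_1, p_2\}$, a direct count via $\lambda = 1$ in $\scrB'$ gives both intersection counts equal to $b - |x \cap B^*|$, again condition~(i). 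When $x \in \scrB \setminus \scrB'$ with $\{p_1, p_2\} \cap x = \emptyset$, the subdesign lemma shows that both intersection counts equal $|x \cap \scrP' \setminus B^*| \in \{0,1\}$, once more condition~(i). The interesting case is $x \in \scrB \setminus \scrB'$ with $p_1 \in x$: the subdesign lemma forces $x \cap \scrP' = \{p_1\}$, so every block of $C_1$ meets $x$ at $p_1$, while any block of $C_2$ meeting $x$ would do so at a point of $(x \cap \scrP') \setminus \{p_1\} = \emptyset$; hence $\Gamma(x) \cap (C_1 \cup C_2) = C_1$, verifying condition~(ii). The symmetric case $p_2 \in x$ yields $C_2$, and the remaining configuration $p_1, p_2 \in x \notin \scrB'$ is excluded by the subdesign lemma.

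The main obstacle, beyond organizing the case analysis, is precisely this asymmetric case: recognizing and isolating the subdesign lemma is what allows one to replace the numerical condition~(i) of Theorem~\ref{thm:2} by the structural condition~(ii) exactly when $x$ contains one of $p_1, p_2$ but is not itself a block of $\scrB'$.
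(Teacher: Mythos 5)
Your proof is correct and follows essentially the same case analysis as the paper's: compute $|C_1|=|C_2|$ from the replication number, verify regularity of $C_1\cup C_2$ via the $b-1$ cross-neighbours, and split $D$ into the block through $p_1,p_2$, blocks of $\scrB'$ missing both points, and blocks of $\scrB\setminus\scrB'$ (which meet $\scrP'$ in at most one point). In fact your counts in two of the condition-(i) cases, namely $b-|x\cap B^*|$ and $|x\cap\scrP'\setminus B^*|$, are slightly more careful than the paper's stated values of $b$ and $1$, which overlook the possibility that the relevant block through $p_i$ is the block $B^*$ joining $p_1$ and $p_2$; since the two counts agree in either reading, the conclusion is unaffected.
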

\begin{proof}
  The points $p_1$ and $p_2$ lie in exactly one block together in $\scrB'$, so 
  $|C_1| = \frac{v'-1}{b-1} - 1 = |C_2|$.
  
  First we show that $C_1 \cup C_2$ is an equitable partition. Each block in $C_i$ is adjacent to
  all blocks in $C_i$. Each block in $C_1$ is adjacent to $b-1$ blocks in $C_2$.
  As $|C_1| = |C_2|$, this suffices.
  
  We have to verify that every block $B$ not in $C_1 \cup C_2$ satisfies $|\Gamma(B) \cap C_1| = |\Gamma(B) \cap C_2|$
  or $\Gamma(B) \cap (C_1 \cup C_2) \in \{ C_1, C_2 \}$.
  The block $B$ through $p_1$ and $p_2$ clearly has $\Gamma(B) \cap (C_1 \cup C_2) = C_1 \cup C_2$.
  A block $B \in \scrB'$, which does not contain $p_1$ or $p_2$, meets $b$ elements of $C_i$ for $i \in \{1, 2 \}$ as $\lambda=1$.
  A block in $\scrB\setminus\scrB'$ contains $0$ or $1$ point of $\scrP'$ as $\lambda=1$.
  A block $B \in \scrB \setminus \scrB'$ which contains exactly one $p_i$ for $i \in \{1, 2\}$, satisfies $\Gamma(B) \cap (C_1 \cup C_2) = C_i$.
  A block $B \in \scrB$ which does not contain $p_1$ or $p_2$ meets $0$ elements of $C_i$ if $B \cap \scrP' = \varnothing$ and $1$ element of $C_i$
  if $|B \cap \scrP| = 1$ for all $i \in \{ 1, 2\}$.
\end{proof}

The graph $\overline{\Gamma}$ obtained by switching with respect to the
switching set given in Theorem~\ref{thm:switching_design} is also a block graph.
Indeed, it is the block graph
of the modified design obtained from $(\scrP,\scrB)$ by replacing $C_i$ by
\[\{L\cup\{p_{i'}\}\setminus\{p_{i}\}\mid L\in C_i\},\]
where $\{i,i'\}=\{1,2\}$.
This is
a special case of the construction given in \cite{Jungnickel1984}. 

As an example 
let $\scrP$ be the set of all $1$-spaces of $\FF_q^n$ and let $\scrB$ be the set of all $2$-spaces of $\FF_q^n$.
For $(\scrP', \scrB')$ we choose an $s$-space $S$ of $\FF_q^n$ with $2 < s < n$.

\begin{lemma}
  The graphs obtained from the block graph $\Gamma$ of the above design $(\scrP, \scrB)$ by switching are non-isomorphic to $\Gamma$.
\end{lemma}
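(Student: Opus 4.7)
The plan is to exhibit an induced subgraph of $\overline{\Gamma}$ whose shape is forbidden in the Grassmann graph $\Gamma = J_q(n,2)$. Since $|C_1| = \frac{q^{s-1}-1}{q-1}-1 \geq q$, one may pick $L \in C_1$; since $s < n$, one may pick $w \in \FF_q^n \setminus S$ and set $T = L + \langle w \rangle$, a 3-dimensional subspace with $L \subset T$ and $T \not\subseteq S$. A dimension count then forces $T \cap S = L$, so $L$ is the unique 2-subspace of $T$ belonging to $\scrB'$, hence the only block of $T$ touched by the switching.

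The next step is to analyze the induced subgraph of $\overline{\Gamma}$ on $X := \{\hat L\} \cup (T \setminus \{L\})$, where $\hat L = (L \setminus \{p_1\}) \cup \{p_2\}$. The $q^2+q$ unchanged blocks in $T \setminus \{L\}$ pairwise meet as distinct 2-subspaces of the 3-space $T$, so they form a clique in $\overline{\Gamma}$. For each such $M$, the intersection $M \cap L$ is a 1-space $t_M$, and $p_2 \notin M$ (else $p_2 \in T \cap S = L$, contradicting $p_2 \notin L$). Hence $M \cap \hat L = \{t_M\} \setminus \{p_1\}$, which is empty precisely when $t_M = p_1$, i.e.\ when $M$ lies in the pencil of 2-subspaces of $T$ through $p_1$. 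That pencil contains $q+1$ elements including $L$, so exactly $q$ blocks of $T \setminus \{L\}$ are non-adjacent to $\hat L$, while the remaining $q^2$ are adjacent.

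The core of the argument is then to show that no induced subgraph of $\Gamma$ realizes this shape: a $(q^2+q)$-clique with an extra vertex adjacent to exactly $q^2$ of its members. The key input is the folklore classification of cliques in $J_q(n,2)$: any clique on at least three vertices lies in a pencil (the star at some 1-space) or in a top (the set of all 2-subspaces of some 3-space). This follows from a short triangle argument—three pairwise intersecting 2-spaces either share a common 1-space, or their three pairwise-intersection 1-spaces are linearly independent and span a unique 3-space containing all three, and any fourth clique element is forced into the same pencil or the same 3-space. Granted this, any $(q^2+q)$-clique $K$ in $\Gamma$ is either a subset of a star at some 1-space $p$, or has the form $T' \setminus \{M_0\}$ for a top $T'$ and $M_0 \in T'$.

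A case analysis of the adjacency of an outside vertex $v$ to such a $K$ finishes the proof. In the star case, $v$ is adjacent to all of $K$ if $p \in v$, and to at most $q+1$ blocks of $K$ (one per 1-space of $v$) if $p \notin v$. In the top case, $v = M_0$ gives adjacency $q^2+q$, while $v \notin T'$ gives $v \cap T'$ of dimension $0$ or $1$, and a pencil count shows that the adjacency of $v$ within $K$ is then $0$, $q$, or $q+1$. Since $q^2 \notin \{0,\,q,\,q+1,\,q^2+q\}$ for $q \geq 2$, no such $v$ exists in $\Gamma$, so $\overline{\Gamma} \not\cong \Gamma$. The main obstacle is the clique classification, but the triangle argument makes it entirely elementary; no Hilton--Milner or EKR machinery is needed.
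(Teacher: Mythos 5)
Your proof is correct and follows essentially the same route as the paper: both arguments hinge on the clique $T\setminus\{L\}$ inside a $3$-space $T$ meeting $S$ exactly in a switched block $L$, together with the classification of (maximal) cliques of $J_q(n,2)$ into stars and tops. The only difference is the invariant extracted from this configuration --- the paper observes that $T\setminus\{L\}$ becomes a \emph{maximal} clique of size $q^2+q$ in $\overline{\Gamma}$, which $\Gamma$ cannot have, whereas you use the external degree $q^2$ of the switched vertex into that clique; this spares you the maximality check over all vertices at the cost of a slightly longer (and correct) case analysis of possible external degrees.
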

\begin{proof}
  Let $\ell$ be the $2$-space spanned by $p_1$ and $p_2$.
  Let $T$ be a $3$-space which meets $S$ in a line $\ell' \neq \ell$ with $p_1 \in \ell'$.
  After switching, all blocks in $\{ B \in \scrB \setminus \scrB': p_1 \in B\}$ form a maximal clique of size $q^2+q$.
  It is well-known that all maximal cliques of $\Gamma$ have size $q^2+q+1$ (cf. \cite[Proposition 2.20]{Mussche2009}).
\end{proof}

There are many similar constructions for designs in the literature. 
For instance, we can replace the $2$-space by a $k$-space with $1 < k < n$.
Our emphasis here is that we
provide an explanation for this construction by considering the block graph and using the linear algebra method.

\section{Polar Spaces}\label{sec:appl}

We apply Theorem~\ref{thm:2} to several strongly regular graphs related to finite classical polar spaces,
so that we obtain new, non-isomorphic strongly regular graphs with the same parameters.
We refer the reader to \cite[Section 9.4]{Brouwer1989}, \cite{Hirschfeld1991}, \cite{Ihringer2016b} and \cite[Chapter 8]{Tits1974}
for more detailed descriptions of finite classical polar spaces. In the following we give a brief overview.
We use projective notation, so we denote $1$-spaces of $\FF_q^n$ as \textit{points}, $2$-spaces as \textit{lines}, and $(n-1)$-spaces
as \textit{hyperplanes}.
There are six finite classical polar spaces: $O^+(2d, q)$, $U(2d, \sqrt{q})$, $U(2d+1, \sqrt{q})$, $Sp(2d, q)$, $O(2d+1, q)$, $O^-(2d+2, q)$.
All polar spaces can be defined by a polarity $\perp$ of $\FF_q^n$, except for $q$ even in the case of the quadrics $O^+(2d, q)$, $O(2d+1, q)$ and $O^-(2d+2, q)$. As we are not showing any results 
for quadrics in even characteristic, we assume in the following that either $q$ is odd or that the polar space
is not a quadric.
A polarity is a incidence preserving bijection between $k$-spaces and $(n-k)$-spaces of order two.
Call a subspace $S$ \textit{isotropic} if $S \subseteq S^\perp$. 
A polar space over $\FF_q^n$ consists of
all isotropic subspaces. We call a subspace $T$ of $\FF_q^n$ \textit{degenerate} if $T^\perp \cap T$ is non-empty.
If $T$ is degenerate, then $T^\perp/(T \cap T^\perp)$ is a non-degenerate finite classical polar space.

Recall the following property of polar spaces:
  Let $\{ x, y, z \}$ be non-isotropic points of the ambient space of a finite classical polar space 
  such that $x, y \in z^\perp$ and $x \in y^\perp$.
  Then $\langle x, y, z \rangle$ is isomorphic to $U(3,\sqrt{q})$ or $O(3, q)$.

Throughout the following let $\PP$ denote a polar space of rank $d \geq 3$ over a finite field of order $q$ 
with an associated polarity $\perp$ such as each
isotropic $(d-1)$-space lies in $q^e+1$ maximal isotropic subspaces. 

\section{Collinearity Graphs of Polar Spaces}

In this section we recover a special case of the result of \cite{Ihringer2016b} using Theorem~\ref{thm:2}.
The result in \cite{Ihringer2016b} was the original motivation for the research presented here.
The collinearity graph $\Gamma$ of $\PP$ has the isotropic points of $\PP$ as vertices and two points 
are adjacent if they are collinear.
The graph $\Gamma$ is well-known to be strongly regular.
Let $P$ be an isotropic $m$-space, where $2 \leq m \leq d$.
Let $L_1$ and $L_2$ be hyperplanes of $P$ with $L_1 \neq L_2$.
Set 
\begin{align*}
  &C_1 = L_1 \setminus L_2, && C_2 = L_2 \setminus L_1.
\end{align*}

\begin{theorem}\label{thm:polar_coll}
  The pair $\{C_1, C_2\}$ is a switching set of $\Gamma$.
  If $d=m \geq 3$, then the graph $\overline{\Gamma}$ obtained by switching is non-isomorphic to $\Gamma$. 
\end{theorem}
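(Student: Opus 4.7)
The plan is in two stages: verifying the switching set, then establishing non-isomorphism for $m=d\geq 3$.

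\textit{Stage 1.} Since $L_1,L_2$ are distinct hyperplanes of the isotropic $m$-space $P$, their intersection is $(m-2)$-dimensional and $|C_1|=|C_2|=q^{m-2}$. Because $P$ is isotropic, every two of its points are collinear, so the induced subgraphs on $C_1$, $C_2$, and $C_1\cup C_2$ are all complete, hence regular with matching degrees on $C_1$ and $C_2$. The bulk of the work is the case analysis for $x\in D$ based on the behavior of $L_i\cap x^\perp$. If $x\in P^\perp$, then $L_i\subseteq x^\perp$ and $\Gamma(x)\cap C_i=C_i$, so condition (i) of Theorem~\ref{thm:2} holds. If $x\in L_1^\perp\setminus P^\perp$, then $L_2\cap x^\perp$ is a hyperplane of $L_2$ containing $L_1\cap L_2$ and so equals $L_1\cap L_2$; thus $\Gamma(x)\cap(C_1\cup C_2)=C_1$, giving condition (ii), with the $L_2^\perp$ case symmetric. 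Finally, if $x\notin L_1^\perp\cup L_2^\perp$, each $L_i\cap x^\perp$ is a proper hyperplane of $L_i$, and a Gaussian count (splitting on whether $L_1\cap L_2\subseteq x^\perp$) gives $|\Gamma(x)\cap C_1|=|\Gamma(x)\cap C_2|\in\{0,q^{m-3}\}$. Every case satisfies the hypotheses of Theorem~\ref{thm:2}.

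\textit{Stage 2.} Pick a generator $P'\neq P$ through $L_1$; this exists because the isotropic $(d-1)$-space $L_1$ lies in $q^e+1\geq 2$ generators. Set $K:=L_2\cup(P'\setminus L_1)$. I would show $K$ is a maximum clique of $\overline\Gamma$: each $z\in P'\setminus L_1$ is isotropic with $z\in L_1^\perp$ (since $L_1\subseteq P'$) and $z\notin P^\perp$ (otherwise $\langle z,P\rangle$ would be an isotropic $(d+1)$-space, impossible), so $z$ is a type-1 swap vertex in the sense of Stage~1. After switching, $z$ is adjacent to $C_2$ in $\overline\Gamma$, while its adjacency to $L_1\cap L_2\subseteq D$ is inherited from $\Gamma$; hence $z$ is adjacent to all of $L_2$. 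Two points of $P'\setminus L_1$ remain adjacent because $P'$ is isotropic. A size count using $L_2\cap P'=L_1\cap L_2$ gives $|K|=(q^d-1)/(q-1)$, which matches the clique number of $\Gamma$ and hence (by cospectrality and the Delsarte--Hoffman ratio bound) also of $\overline\Gamma$.

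The decisive geometric fact is that $K$ spans $L_2+P'$, of dimension $d+1$ and therefore not isotropic; every maximum clique of $\Gamma$, in contrast, is the point set of a generator, because pairwise-collinear isotropic points lie in a common maximal isotropic subspace of $\PP$. The main obstacle is converting this into a graph-theoretic invariant. My intended approach is to compare the pencils of maximum cliques through the fixed $(d-1)$-clique $L_2$: both $\Gamma$ and $\overline\Gamma$ carry $q^e+1$ maximum cliques through $L_2$, but in $\overline\Gamma$ the $q^e$ non-$P$ members are all of the form of $K$, and the interaction of this pencil with the parallel pencil through $L_1$ reveals the non-isotropy of the spans of these new cliques. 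Formalising this interaction into an isomorphism invariant --- for instance, by counting triples of pairwise-incident maximum cliques whose pairwise intersections match a specified combinatorial pattern, or by exhibiting a local invariant at a $C_1$-vertex that is incompatible with the vertex-transitivity of $\Gamma$ --- is the step that requires the most care.
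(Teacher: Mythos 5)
Your Stage 1 is correct and is essentially the paper's argument: the induced subgraph on $C_1\cup C_2$ is complete because $P$ is isotropic, and the case split on $x\in P^\perp$, $x\in L_i^\perp\setminus P^\perp$, $x\notin L_1^\perp\cup L_2^\perp$ is just a slightly more explicit version of the paper's split on $x^\perp\cap P$. The counts $|C_1|=|C_2|=q^{m-2}$ and $|\Gamma(x)\cap C_i|\in\{0,q^{m-3}\}$ check out, and the four cases are exhaustive since $P^\perp=L_1^\perp\cap L_2^\perp$.

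Stage 2 has a genuine gap. Everything you actually verify is correct: $P\cap P'=L_1$, each $z\in P'\setminus L_1$ satisfies $z\in L_1^\perp\setminus P^\perp$ and hence gets switched from $C_1$ to $C_2$, so $K=L_2\cup(P'\setminus L_1)$ is a clique of $\overline{\Gamma}$ of size $(q^d-1)/(q-1)$, which is maximum by the ratio bound applied to the cospectral (hence strongly regular with the same parameters) graph $\overline{\Gamma}$. But the existence of a maximum clique whose span is not isotropic is a statement about the embedding, not about the graph, and you say so yourself: "formalising this interaction into an isomorphism invariant \dots is the step that requires the most care." That step is the entire content of the non-isomorphism claim, and you do not supply it --- you only list candidate invariants (intersection patterns of triples of maximum cliques, a local invariant violating vertex-transitivity) without computing any of them in either graph. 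As written, the argument would equally well "prove" non-isomorphism in situations where the switched graph happens to be isomorphic to the original (e.g.\ nothing in your Stage 2 uses $d\geq 3$, yet the hypothesis is needed: for $d=2$ the construction does not produce a new graph). For comparison, the paper does not prove non-isomorphism in-line either; it identifies $\Gamma$ and $\overline{\Gamma}$ with the graphs $\Gamma_0$ and $\Gamma_1$ of Section 4 of \cite{Ihringer2016b} and cites the counting argument there (which is where $d\geq 3$ enters). Your clique $K$ is in fact exactly the modified maximal clique used in that reference, so you have rediscovered the right object; to close the gap you would need to carry out an actual count --- for instance, the number of maximum cliques through a suitable pair of adjacent vertices, or a triple-intersection count as in the later Theorem~\ref{thm:herm_ni_ni} --- and exhibit a value that differs between $\Gamma$ and $\overline{\Gamma}$.
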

\begin{proof}
 First we show that $\{ C_1, C_2 \}$ is a switching set. 
 Clearly, the induced subgraph
 on $C_1 \cup C_2$ is a complete graph, hence we only have to investigate the size of
 $x^\perp \cap C_1$ and $x^\perp \cap C_2$ for $x \notin C_1 \cup C_2$.
 If $x \in P^\perp$, then $x^\perp \cap (C_1 \cup C_2) = C_1 \cup C_2$.
 For $x \notin P^\perp$, let $L = x^\perp \cap P$.
 If $L = L_i$ for $i \in \{ 1,2 \}$, then $x^\perp \cap (C_1 \cup C_2) = C_i$.
 Otherwise, $\dim(x^\perp \cap L_1) = \dim(x^\perp \cap L_2)$, so $|x^\perp \cap C_1| = |x^\perp \cap C_2|$.
 Hence, $\{ C_1, C_2 \}$
 is a switching set.
 
 It is easy to see that the graph $\Gamma_0$ considered in Section 4 of \cite{Ihringer2016b} corresponds
 to $\Gamma$ (and $\Gamma_1$ to $\overline{\Gamma}$). Non-isomorphy is shown there
 and it requires $d \geq 3$.
\end{proof}

Recall that $\Gamma$ is a strongly regular graph, so $\overline{\Gamma}$
is a strongly regular graph too.
We do not know if all graphs constructed in \cite{Ihringer2016b} can be reconstructed using
Theorem~\ref{thm:2}.

\section{Non-isotropic Points of Unitary Polar Spaces}\label{sec:polarity}

Let $\perp$ be a polarity on $\FF_q^n$ such that there are non-isotropic points,
so the associated polar space $\PP$ is one of $O^+(2d, q)$, $U(2d, \sqrt{q})$, $U(2d+1, \sqrt{q})$, $O(2d+1, q)$, $O^-(2d+2, q)$.
In the case of the quadrics, $q$ is odd. Let $\Gamma$ be the graph with the non-isotropic points of $\PP$ as vertices
and points $x, y$ are adjacent if $x \in y^\perp$. For this paper we call $\Gamma$ a \textit{polarity graph}.\footnote{This is slightly non-standard as the polarity graph usually includes isotropic points as well.}
As a motivation for Section \ref{sec:quads}, we show that $\Gamma$ is not determined by their spectrum for $n \geq 4$.
Note that $\Gamma$ is in general not strongly regular, but has several nice properties. 
For instance it is $K_{n+2}$-free and therefore for many parameters,
it gives the best known constructive lower bound on certain Ramsey numbers \cite{Alon1997}.
For $q=4$, in the case of $U(n, 2)$, the graph $\Gamma$ is indeed strongly regular such that the complement $\overline{\Gamma}$ of $\Gamma$ has the parameters (with $\epsilon = (-1)^n$):
\begin{align*}
  &v = 2^{n-1}(2^n-\epsilon)/3, && k = (2^{n-1}+\epsilon)(2^{n-2}-\epsilon),\\
  &\lambda = 3 \cdot 2^{2n-5} - \epsilon 2^{n-2}-2, &&\mu = 3 \cdot 2^{n-3} (2^{n-2}-\epsilon).
\end{align*}
One can see as follows that $\Gamma$ is strongly regular (as we are not aware of a reference): In \cite[C14]{Hubaut1975} a strongly regular graph $\Gamma'$ on the non-isotropic points of $U(n, 2)$ is defined, where two points are adjacent if they span a degenerate subspace. As a non-degenerate lines isomorphic to $U(2, 2)$ contains exactly two non-isotropic points,
which are pairwise orthogonal, $\Gamma' = \overline{\Gamma}$.

Let $3 \leq m \leq d$.
Let $L_1$ and $L_2$ be non-isotropic subspaces of dimension $m-1$ 
such that $L_1 \cap L_2$ is the radical of $L_1$ and $L_2$ and has dimension $m-2$,
that is $L_1 \cap L_2 = (L_1 \cap L_1^\perp) \cap (L_2 \cap L_2^\perp)$.
Set $P = \langle L_1, L_2 \rangle$, $p = L_1 \cap L_2$,
$C_1 = L_1 \setminus L_2$, and $C_2 = L_2 \setminus L_1$.

\begin{theorem}\label{thm:herm_ni_coll}
  The pair $\{C_1, C_2\}$ is a switching set of $\Gamma$.
\end{theorem}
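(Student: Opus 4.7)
The plan is to verify the three hypotheses of Theorem~\ref{thm:2} for the pair $\{C_1, C_2\}$ in the polarity graph $\Gamma$. First, I would pin down the structure of $L_i$. The hypothesis $p = L_1 \cap L_2 = (L_1 \cap L_1^\perp) \cap (L_2 \cap L_2^\perp)$ combined with the dimension count forces $p$ to be the full radical of each $L_i$: the radical $L_i \cap L_i^\perp$ contains $p$, is a proper subspace of $L_i$ (because $L_i$ is non-isotropic), and $p$ already has codimension one in $L_i$. Hence the non-isotropic points of $L_i$ are exactly those in $L_i \setminus p$, so $C_i$ is the complete set of non-isotropic points of $L_i$, and a direct count yields $|C_1| = |C_2| = q^{m-1}$.

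To verify the regularity conditions, I would fix non-isotropic vectors $e_i \in L_i$ so that every point of $C_i$ has a unique representative of the form $r + e_i$ with $r \in p$. Because $p \subseteq L_i^\perp$, the associated form $b$ satisfies $b(r + e_i, r' + e_i) = b(e_i, e_i) \neq 0$, so no two distinct points of $C_i$ are adjacent and the induced subgraph on $C_i$ is empty. A parallel calculation gives $b(r_1 + e_1, r_2 + e_2) = b(e_1, e_2)$, so either every point of $C_1$ is adjacent to every point of $C_2$ or none are. In both cases the induced subgraph on $C_1 \cup C_2$ is regular and the degrees inside $C_1$ and $C_2$ agree (both being zero).

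For the third hypothesis, let $x$ be a non-isotropic point outside $C_1 \cup C_2$ and observe that $\Gamma(x) \cap C_i = (x^\perp \cap L_i) \setminus p$. I would split on whether $x \in p^\perp$. If $x \not\perp p$, then $p \not\subseteq x^\perp$, so $x^\perp \cap L_i$ is a hyperplane of $L_i$ distinct from $p$, giving $|(x^\perp \cap L_i) \setminus p| = q^{m-2}$ for both $i = 1, 2$ and hence $|\Gamma(x) \cap C_1| = |\Gamma(x) \cap C_2|$. If instead $x \perp p$, then $p \subseteq x^\perp \cap L_i \subseteq L_i$; since $p$ already has codimension one in $L_i$, the intersection must be either $p$ or all of $L_i$, so $\Gamma(x) \cap C_i \in \{\emptyset, C_i\}$. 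Therefore $\Gamma(x) \cap (C_1 \cup C_2) \in \{\emptyset, C_1, C_2, C_1 \cup C_2\}$, and one of the two conditions of Theorem~\ref{thm:2} is satisfied.

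The main obstacle is recognizing that the only relevant distinction for $x$ is whether $x \perp p$. Once this dichotomy is in hand, the codimension-one position of $p$ inside $L_i$ automatically excludes the otherwise worrisome mixed cases (for example $\Gamma(x) \cap C_1 = C_1$ while $\Gamma(x) \cap C_2$ is a proper nonempty subset of $C_2$), and the verification collapses to the clean case analysis above.
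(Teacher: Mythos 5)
Your proof is correct and follows essentially the same route as the paper's: both verify the hypotheses of Theorem~\ref{thm:2} by showing the induced subgraph on $C_1\cup C_2$ is empty or complete bipartite and then analysing $x^\perp\cap L_i$ for vertices outside $C_1\cup C_2$, your dichotomy on whether $x\perp p$ being equivalent to the paper's case split on how the hyperplane $x^\perp\cap P$ meets $p$ and the $L_i$. The only slip is the count $|C_i|=q^{m-1}$ --- the number of projective points of $L_i\setminus p$ is $q^{m-2}$ --- but since $C_1$ and $C_2$ are counted by the same formula, the equality $|C_1|=|C_2|$, which is all that is needed, still holds.
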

\begin{proof}
  Let $x$ be a non-isotropic point in $L_i$ for some $i \in \{1,2\}$.
  As $x \notin x^\perp$, $x^\perp$ meets $P$ in a hyperplane.
  Clearly, $p \subseteq x^\perp$ by construction.
  As $x \in L_i$ is non-isotropic, $x^\perp \cap L_i$ contains no non-isotropic points.
  For $\{ i,j \} = \{ 1, 2\}$, $x^\perp \cap L_j$ contains all or no 
  non-isotropic points of $L_j$, depending on
  whether $L_j = L_i^\perp \cap P$ or not. Thus, the induced subgraph
  on $C_1 \cup C_2$ is a complete bipartite graph or an empty graph.
  Hence, the regularity condition of Theorem~\ref{thm:2} is satisfied.
  
  Next we have to consider $x \notin C_1 \cup C_2$.
  For $x$ a non-isotropic point not in $L_1 \cup L_2$, we have
  that either $P \subseteq x^\perp$ or $x^\perp \cap P$ is a hyperplane of $P$.
  If $P \subseteq x^\perp$, then clearly $|x^\perp \cap C_1| = |x^\perp \cap C_2|$.
  If $P \cap x^\perp$ is a hyperplane of $P$ and $p \nsubseteq P \cap x^\perp$,
  then $|x^\perp \cap L_1| = |x^\perp \cap L_2|$.
  If $P \cap x^\perp$ is a hyperplane of $P$ and $p \subseteq P \cap x^\perp$, then 
  $P \cap x^\perp$ is in $\{ L_1, L_2 \}$, so $x^\perp \cap (C_1 \cup C_2) \in \{ C_1, C_2 \}$,
  or $x^\perp \cap L_1 = p = x^\perp \cap L_2$, so $x^\perp \cap (C_1 \cup C_2)$ is empty.
  
  Hence, $\{ C_1, C_2 \}$ is a switching set.
\end{proof}

We show non-isomorphy for $U(n, \sqrt{q})$ for $m=3$. We believe that the same is true for $m>3$, but do
not intend to show it.
The argument for $O^+(n, q)$, $O^-(n, q)$, and $O(n, q)$
is analogous, but slightly more tedious as the perp of $U(3, \sqrt{q})$ in $U(n, \sqrt{q})$ is always 
isomorphic to $U(n-3, \sqrt{q})$, while the perp of $O(3, q)$ is either $O^+(n-3, q)$, $O^-(n-3, q)$,
or $O(n-3, q)$.

\begin{lemma}\label{cor:herm_ni_triple_ad}
  For a clique $\{ x, y, z \}$ of $\Gamma$ the value $|\Gamma(x) \cap \Gamma(y) \cap \Gamma(z)|$
  is independent of our choice of $\{ x, y, z \}$.
\end{lemma}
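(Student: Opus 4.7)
The plan is to rewrite the combinatorial quantity $|\Gamma(x) \cap \Gamma(y) \cap \Gamma(z)|$ purely geometrically and then invoke the structural fact recalled in Section~\ref{sec:appl}. By definition of $\Gamma$, a vertex $w$ lies in $\Gamma(x)\cap\Gamma(y)\cap\Gamma(z)$ if and only if $w$ is a non-isotropic point satisfying $w\in x^\perp\cap y^\perp\cap z^\perp=\langle x,y,z\rangle^\perp$. So the count equals the number of non-isotropic points in $\langle x,y,z\rangle^\perp$ with respect to the induced Hermitian form, and it suffices to show that this induced form has isomorphism type independent of $\{x,y,z\}$.

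Next, since $\{x,y,z\}$ is a clique, the three points are pairwise perpendicular non-isotropic points. I would first note that $\langle x,y,z\rangle$ is genuinely $3$-dimensional: the perpendicular pair $x,y$ spans a non-degenerate $2$-space in which the only non-isotropic points perpendicular to both $x$ and $y$ are $x$ and $y$ themselves, so a third such point cannot lie in $\langle x,y\rangle$. Now the property recalled at the end of Section~\ref{sec:appl} applies, and in the unitary case $U(n,\sqrt{q})$ it forces $\langle x,y,z\rangle\cong U(3,\sqrt{q})$; in particular, $\langle x,y,z\rangle$ is a non-degenerate $3$-space.

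Because the ambient form is non-degenerate and $\langle x,y,z\rangle$ is non-degenerate of dimension $3$, its perp $\langle x,y,z\rangle^\perp$ is a non-degenerate Hermitian subspace of dimension $n-3$, hence isomorphic to $U(n-3,\sqrt{q})$. The number of non-isotropic points of $U(n-3,\sqrt{q})$ is a standard quantity depending only on $n$ and $q$, so $|\Gamma(x)\cap\Gamma(y)\cap\Gamma(z)|$ is indeed independent of the chosen clique.

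There is no serious obstacle here; the work is all done by the earlier recalled property, which pins down both the dimension and the isomorphism type of $\langle x,y,z\rangle$. The only point that requires a brief verification is that $x,y,z$ are linearly independent, which (as above) is a one-line consequence of $\langle x,y\rangle$ being a non-degenerate hyperbolic line containing exactly the two mutually perpendicular non-isotropic points $x$ and $y$. The analogous argument in the orthogonal cases would additionally require tracking which of $O^+,O^-,O$ appears in the perp, which is why the paper restricts the non-isomorphy argument to the unitary case.
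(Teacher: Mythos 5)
Your proof is correct and follows essentially the same route as the paper: the common neighbourhood is exactly the set of non-isotropic points of $\langle x,y,z\rangle^\perp\cong U(n-3,\sqrt{q})$, whose cardinality depends only on $n$ and $q$; you merely make explicit the linear independence and non-degeneracy steps that the paper leaves implicit. One small slip: in $\langle x,y\rangle$ no point at all (not even $x$ or $y$, since a non-isotropic point is not perpendicular to itself) is perpendicular to both $x$ and $y$, because $\langle x,y\rangle\cap\langle x,y\rangle^\perp=0$ for a non-degenerate line --- this actually gives the independence of $x,y,z$ even more directly.
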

\begin{proof}
  Since $\Gamma(x) \cap \Gamma(y) \cap \Gamma(z)$ is contained in
  $\langle x, y, z \rangle^\perp \cong U(n-3, \sqrt{q})$, $|\Gamma(x) \cap \Gamma(y) \cap \Gamma(z)|$
  is the number of non-isotropic points in $U(n-3, \sqrt{q})$ which is independent of our choice of $\{ x, y, z \}$.
\end{proof}

\begin{theorem}\label{thm:herm_ni_ni}
  Suppose that $m=3$ and $P/p$ isomorphic to $U(2, \sqrt{q})$.
  If $n \geq 6$, then the graph $\overline{\Gamma}$ by switching is non-isomorphic to $\Gamma$. 
\end{theorem}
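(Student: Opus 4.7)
The plan is to use Lemma~\ref{cor:herm_ni_triple_ad} as an invariant. That lemma says every triangle of $\Gamma$ has the same number $N$ of common neighbors, where $N$ is the number of non-isotropic points of $U(n-3,\sqrt{q})$. Any isomorphism $\overline{\Gamma}\cong\Gamma$ would force the same constancy on $\overline{\Gamma}$, so it suffices to exhibit a single triangle of $\overline{\Gamma}$ whose common neighbor count is not $N$.

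First, I identify the flipped sets $F_i := (L_i^{\perp}\setminus L_{3-i}^{\perp})\cap V$; the case analysis in the proof of Theorem~\ref{thm:herm_ni_coll} identifies these as exactly the vertices whose $(C_1\cup C_2)$-neighborhood is altered by switching, each $v\in F_2$ trading its edges to $C_2$ for edges to $C_1$. Then I fix $w\in C_1$ and pick two orthogonal non-isotropic points $x,y\in F_2$. Such a pair exists for $n\ge 6$: the subspace $L_2^{\perp}$ has dimension $n-2$ with $1$-dimensional radical $p$ and non-degenerate quotient $U(n-3,\sqrt{q})$ of rank at least $2$, while $L_1^{\perp}\cap L_2^{\perp}=P^{\perp}$ is a proper subspace of $L_2^{\perp}$, so many orthogonal non-isotropic pairs lie in $L_2^{\perp}\setminus L_1^{\perp}$. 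The triple $\{w,x,y\}$ is then a triangle in $\overline{\Gamma}$: the edge $xy$ is preserved (both endpoints lie in $D$), while $wx$ and $wy$ are new edges introduced by switching.

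Next, I count common neighbors of $\{w,x,y\}$ in $\overline{\Gamma}$ by partitioning the vertex set as $V = C_1\cup C_2\cup F_1\cup F_2\cup (D\setminus(F_1\cup F_2))$. Candidates in $C_1\setminus\{w\}$, $C_2$, and $F_1$ are ruled out by independence of $C_1$, by $\overline{\Gamma}(x)\cap C_2=\emptyset$ for $x\in F_2$, and by $\overline{\Gamma}(v)\cap C_1=\emptyset$ for $v\in F_1$, respectively. The remaining contributions sum to
\[
N_{\mathrm{new}} = \bigl|F_2\cap\langle x,y\rangle^{\perp}\bigr| + \bigl|(D\setminus(F_1\cup F_2))\cap\langle w,x,y\rangle^{\perp}\bigr|.
\]
Since $\langle w,x,y\rangle$ is generically isomorphic to $U(3,\sqrt{q})$, we have $|\langle w,x,y\rangle^{\perp}\cap V|=N$. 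Expanding this by vertex type, using $F_2\cap w^{\perp}\subseteq F_2\cap P^{\perp}=\emptyset$ and $C_1\cap w^{\perp}=\emptyset$ (since $L_1\cap w^{\perp}=\{p\}$ is isotropic), reduces the difference to
\[
N_{\mathrm{new}} - N = \bigl(|F_2\cap\langle x,y\rangle^{\perp}| - |F_1\cap\langle x,y\rangle^{\perp}|\bigr) - |C_2\cap\langle w,x,y\rangle^{\perp}|.
\]

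In the subcase $L_1\perp L_2$, the inclusion $C_2\subseteq L_1^{\perp}\subseteq w^{\perp}$ forces $|C_2\cap\langle w,x,y\rangle^{\perp}|=q$, while the bracketed difference vanishes because $(L_1+\langle x,y\rangle)^{\perp}$ and $(L_2+\langle x,y\rangle)^{\perp}$ are isomorphic polar spaces of dimension $n-4$ with common radical $p$ and quotient $U(n-5,\sqrt{q})$, sharing the further intersection $(P+\langle x,y\rangle)^{\perp}$ with the opposite perp. Thus $N_{\mathrm{new}}=N-q\ne N$, concluding non-isomorphy in this subcase. The hard part will be the complementary subcase $L_1\not\perp L_2$: here $|C_2\cap\langle w,x,y\rangle^{\perp}|=0$ and the same symmetry makes the bracketed difference vanish, so the triangle $\{w,x,y\}$ is inconclusive. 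To conclude non-isomorphy one must then select a triangle that breaks the $L_1\leftrightarrow L_2$ symmetry, for instance $\{w,x,z\}$ with $w\in C_1$, $x\in F_2$, and $z\in D\setminus(F_1\cup F_2)$ adjacent to both, and track how the asymmetric contributions of $F_1$ and $F_2$ to the common neighbors diverge; this is the main technical challenge.
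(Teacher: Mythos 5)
Your overall strategy --- use Lemma~\ref{cor:herm_ni_triple_ad} as an isomorphism invariant and exhibit one triangle whose common-neighbour count is wrong --- is exactly the paper's strategy. But your execution has a genuine gap that you yourself flag: with your witness triangle $\{w,x,y\}$ ($w\in C_1$, $x,y\in F_2$) the discrepancy you compute vanishes in the subcase $L_1\not\perp L_2$, and you leave that subcase as ``the main technical challenge'' with only a vague suggestion of which triangle to try next. Since the theorem's hypothesis ($m=3$, $P/p\cong U(2,\sqrt q)$) does not force $L_1\perp L_2$, the argument is incomplete as written. There are also two smaller soft spots in the subcase you do treat: the edges $wx$, $wy$ of your triangle are switched (non-geometric) edges, so $w\not\perp x$ and $\langle w,x,y\rangle$ is not the pairwise-orthogonal configuration of Lemma~\ref{cor:herm_ni_triple_ad}; your identity $|\langle w,x,y\rangle^\perp\cap V|=N$ therefore needs $\langle w,x,y\rangle$ to be non-degenerate, which you only assert ``generically''. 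And your description $F_i=(L_i^\perp\setminus L_{3-i}^\perp)\cap V$ can accidentally contain $C_{3-i}$ when $L_1\perp L_2$, so the partition underlying your count needs the explicit exclusion of $C_1\cup C_2$.

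The paper avoids all of this by choosing the witness triangle entirely inside $D$ with all three of its edges geometric and unaffected by the switching: it takes pairwise orthogonal non-isotropic $x,y,z\notin P$ with $x^\perp\cap P=L_1$ and with $L_y=y^\perp\cap P$, $L_z=z^\perp\cap P$ distinct non-isotropic lines meeting $L_1$ in a common non-isotropic point different from $p$. Then the only change to the common neighbourhood is that the single non-isotropic point of $L_1\cap L_y\cap L_z$ is lost (the $x$-side flips from $C_1$ to $C_2$) while $L_2\cap L_y\cap L_z$ contributes nothing, so the count drops by exactly $1$ uniformly in both of your subcases. The real content of the paper's proof is then the existence of such a triple, established by an explicit coordinate construction inside a $U(6,\sqrt q)$ subspace containing $P$ --- which is precisely where the hypothesis $n\geq 6$ enters. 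To repair your proof you would either have to carry out the asymmetric-triangle computation you defer, or switch to a triangle of the paper's type.
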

\begin{proof}
  Let $x, y, z$ be pairwise adjacent vertices not in $P$ with $L_x = x^\perp \cap P$, $L_y = y^\perp \cap P$,
  $L_z = z^\perp \cap P$ such that $L_x = L_1$, $L_x$, $L_y$ and $L_z$ are non-isotropic lines in $P$ 
  with $L_y \neq L_z$ and $L_y \cap L_x = L_z \cap L_x \neq p$.
  
  \textbf{Claim.} There exist $x$, $y$, and $z$ as described.
  
  Assume for now that the claim is true. Then $\Gamma(x) \cap \Gamma(y) \cap \Gamma(z)$ contains the unique non-isotropic point of
  $L_1 \cap L_y \cap L_z$, while $L_2 \cap L_y \cap L_z$ contains no non-isotropic points, so
  \begin{align*}
   |\overline{\Gamma}(x) \cap \overline{\Gamma}(y) \cap \overline{\Gamma}(z)| = |\Gamma(x) \cap \Gamma(y) \cap \Gamma(z)| - 1
  \end{align*}
  as no other adjacencies change. By Corollary~\ref{cor:herm_ni_triple_ad}, $\overline{\Gamma}$ is non-isomorphic to $\Gamma$.
  
  We still have to show the claim.
  
  Take a $U(6, \sqrt{q})$ space containing $P$ with the decomposition
  \begin{align*}
   \langle p_0, q_0 \rangle \perp \langle p_1, q_1 \rangle \perp \langle p_2, q_2 \rangle,
  \end{align*}
  where the $p_i$ are isotropic points and $\langle p_i, q_i \rangle$ are non-degenerate lines.
  Let $\{ r_i, s_i \}$ be an orthogonal basis of $\langle p_i, q_i \rangle$, that is in particular $s_i \in r_i^\perp$.
  Without loss of generality $P = \langle p_0, r_1, r_2 \rangle$,
  where $p = p_0$ and $L_1 = \langle p_0, r_1 \rangle$.
  Set 
  \begin{align*}
   &x = s_1 + p_2, && y = r_0, && z = s_0 + p_2.
  \end{align*}
  We now have the desired properties:
  The line $\langle r_1, r_2 \rangle$ is isomorphic to $U(2, \sqrt{q})$ and $p$ is
  orthogonal to $\langle r_1, r_2 \rangle$, so $P/p$ is isomorphic to $U(2, \sqrt{q})$.
  The points $x$, $y$, and $z$ are pairwise orthogonal, so $\{ x,y,z \}$ is a clique.
  We have that $x^\perp \cap P = \langle p_0, r_1 \rangle$.
  Furthermore, $L_y = y^\perp \cap P = \langle r_1, r_2 \rangle$, 
  but $r_2 \notin z^\perp \cap P = L_z$, so $L_y \neq L_z$.
  Additionally, $L_y \cap L_x = \langle r_1 \rangle = L_z \cap L_x$, so 
  in particular $p$ is not in $L_y$ or $L_z$.
\end{proof}

We conclude that there are graphs cospectral, but non-isomorphic, to $\Gamma$ for $n \geq 6$.
In particular, for $q=4$ we obtain new strongly regular graphs with parameters as above.

\section{Non-isotropic Points of Quadrics}\label{sec:quads}

Let $\PP$ be a polar space of type $O(2d+1, q)$, $O^+(2d, q)$, or $O^-(2d, q)$,
where $q\geq 3$ odd and $d \geq 2$, induced by some quadratic form $\sigma$ on $\FF_q^{n}$.
Here $n=2d+1$ if $\PP \cong O(2d+1, q)$ and $n=2d$ otherwise.
For a non-isotropic point $p$ we have that $\sigma(p)$ is a square 
or a non-square of $\FF_q$.
For $O(2d+1, q)$, we may choose the quadratic form $\sigma$ in
such a way that this is equivalent to $p^\perp$ isomorphic 
to $O^+(2d,q)$, respectively, $O^-(2d,q)$.
Set $V_{+} = \{ p: \sigma(p) \text{ square}\}$
and $V_{-} = \{ p: \sigma(p) \text{ non-square}\}$.
Let $\Gamma_\epsilon$ be the graph with the points in $V_\epsilon$ 
as vertices where two points $x, y$ are adjacent if $x \in y^\perp$.
It is well known that this is a strongly regular graph with parameters
\begin{align*}
  &v = 3^d(3^d+\epsilon)/2, && k = 3^{d-1}(3^d-\epsilon)/2,\\
  &\lambda = \mu = 3^{d-1} (3^{d-1}-\epsilon)/2,
\end{align*}
if $\PP \cong O(2d+1, 3)$ \cite[\S8.10]{Hubaut1975}, with parameters
\begin{align*}
  &v = 3^{d-1}(3^d-\zeta)/2, && k = 3^{d-1}(3^{d-1}-\zeta)/2,\\
  &\lambda = 3^{d-2} (3^{d-1}+\zeta)/2, && \mu = 3^{d-1} (3^{d-2}-\zeta)/2,
\end{align*}
if $\PP \cong O^\zeta(2d, 3)$ \cite[p. 377]{Hubaut1975},\footnote{Notice that $\mu$ is given incorrectly in \cite{Hubaut1975}. One can find the correct parameters on \url{https://www.win.tue.nl/~aeb/graphs/srghub.html}.} and with parameters
\begin{align*}
  &v = 5^d(5^d+\epsilon)/2, && k = 5^{d-1}(5^d-\epsilon)/2,\\
  &\lambda = 5^{d-1} (5^{d-1}+\epsilon)/2, && \mu = 5^{d-1} (5^{d-1}-\epsilon)/2,
\end{align*}
if $\PP \cong O(2d+1, 5)$ \cite[\S7.D]{Brouwer1984}.
To our knowledge it is not known if these strongly regular graphs
are determined by their parameters except for a finite number 
of small cases. 

Let $3 \leq m \leq d$.
Following the construction
of the previous section, 
let $L_1$ and $L_2$ be non-isotropic subspaces of dimension $m-1$ 
such that $L_1 \cap L_2$ is the radical of $L_1$ and $L_2$ and has dimension $m-2$,
that is $L_1 \cap L_2 = (L_1 \cap L_1^\perp) \cap (L_2 \cap L_2^\perp)$.
Set $P = \langle L_1, L_2 \rangle$, $p = L_1 \cap L_2$,
$C_1 = L_1 \setminus L_2$, and $C_2 = L_2 \setminus L_1$.
Furthermore, suppose that the points of $C_1$ and $C_2$ are of the same type,
so both are in $V_{\epsilon}$. Such $L_1$ and $L_2$ exist unless $P/p = O^+(2, 3)$.

The arguments in this section are almost identical to the ones for 
non-isotropic points in $U(n, \sqrt{q})$,
so we only elaborate on the parts of the argument which are slightly
different.

\begin{theorem}\label{thm:para_ni_q_eq_3}
  Let $\delta = -$ if $q \equiv 3 \pmod{4}$ and $\delta = +$ if $q \equiv 1 \pmod{4}$.
  The pair $\{C_1, C_2\}$ is a switching set of $\Gamma_\epsilon$.
  Suppose that $m=3$ and $P/p$ is isomorphic to $O^\delta(2, q)$.
  If $n \geq 7$,
  then the graph $\overline{\Gamma}_\epsilon$ obtained by switching is non-isomorphic to $\Gamma_\epsilon$.
\end{theorem}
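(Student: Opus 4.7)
The plan is to follow the architecture of Theorem~\ref{thm:herm_ni_ni}: first verify that $\{C_1, C_2\}$ is a switching set of $\Gamma_\epsilon$ by essentially the same case analysis as in Theorem~\ref{thm:herm_ni_coll}, then exhibit a triangle in $\Gamma_\epsilon$ whose number of common neighbors changes by exactly one after switching, and combine this with an invariance lemma to conclude non-isomorphism. The two places where the quadric case diverges from the unitary case are (i) the hypothesis on $\delta$, which enters the switching set verification, and (ii) the triangle-count invariance, where the perp type is no longer automatically constant.

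For the switching set property, the additional observation is that a non-isotropic $2$-space $L_i$ with radical $p$ has all of its non-isotropic points in a single $V_\pm$, since $\sigma$ restricted to $L_i/p$ is a rank-one form whose nonzero values form a single square class. Requiring that both $L_1$ and $L_2$ contribute to the same $V_\epsilon$ is a discriminant condition on $P/p \cong O^\delta(2, q)$; the case split $q \equiv 1$ versus $q \equiv 3 \pmod{4}$ (equivalently, whether $-1$ is a square in $\FF_q$) is exactly what determines the correct value of $\delta$. With this in place, the analysis of $x^\perp \cap P$ for a non-isotropic $x \in V_\epsilon \setminus (C_1 \cup C_2)$ proceeds verbatim as in Theorem~\ref{thm:herm_ni_coll}, producing either a balanced count, membership in $\{C_1, C_2\}$, or an empty intersection.

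The analogue of Corollary~\ref{cor:herm_ni_triple_ad} needs care, because for a triangle $\{x, y, z\}$ of $\Gamma_\epsilon$ the perp $\langle x, y, z\rangle^\perp$ may a priori be of type $O^+(n-3, q)$, $O^-(n-3, q)$, or $O(n-3, q)$ according to the discriminant of $\langle x, y, z\rangle$. That discriminant, however, equals $\sigma(x)\sigma(y)\sigma(z)$ modulo squares, and since $x, y, z \in V_\epsilon$ all have $\sigma$-values in a single square class, this product depends only on $\epsilon$ and on $\FF_q$. Hence the perp type is fixed across all triangles of $\Gamma_\epsilon$, and $|\Gamma_\epsilon(x) \cap \Gamma_\epsilon(y) \cap \Gamma_\epsilon(z)|$ equals a constant $\kappa$ depending only on the ambient polar space and on $\epsilon$.

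For the final construction I would embed $P$ in a non-degenerate subspace of dimension $7$ and decompose it as $\langle p_0, q_0\rangle \perp \langle p_1, q_1\rangle \perp \langle p_2, q_2\rangle \perp \langle w\rangle$ with orthogonal bases of each line, mimicking the unitary construction but with one extra non-isotropic direction $w$ available to adjust square classes. One then picks $P = \langle p_0, r_1, r_2\rangle$ with $p = p_0$ and takes $x = s_1 + p_2$, $z = s_0 + p_2$, and $y$ a suitable combination of $r_0$ and $w$ whose $\sigma$-value lies in the $\epsilon$-class; the need for this extra direction is precisely why the bound is $n \geq 7$ rather than the unitary $n \geq 6$. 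As in Theorem~\ref{thm:herm_ni_ni}, the triple $\{x, y, z\}$ is a clique in $\Gamma_\epsilon$ with $L_y \neq L_z$ and $L_x \cap L_y = L_x \cap L_z \neq p$, and the unique non-isotropic point of $L_1 \cap L_y \cap L_z$ lies in $C_1$ and is removed by switching, while $L_2 \cap L_y \cap L_z$ contributes no vertex of $V_\epsilon$. Therefore $|\overline{\Gamma}_\epsilon(x) \cap \overline{\Gamma}_\epsilon(y) \cap \overline{\Gamma}_\epsilon(z)| = \kappa - 1$, contradicting the invariance lemma and yielding $\overline{\Gamma}_\epsilon \not\cong \Gamma_\epsilon$. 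The main obstacle throughout is the discriminant and square-class bookkeeping that underlies both the choice of $\delta$ and the triangle-invariance lemma; once these are pinned down, the rest of the argument is a direct transcription of the unitary proof.
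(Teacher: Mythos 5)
Your overall architecture matches the paper's (switching set as in Theorem~\ref{thm:herm_ni_coll}, a triangle-count invariant, and an explicit clique whose common-neighbour count drops by one), and your discriminant argument for the invariance lemma is correct and in fact supplies a detail the paper only asserts. However, there are two genuine problems. First, you misplace the role of the hypothesis $P/p\cong O^\delta(2,q)$: it is \emph{not} what makes $C_1$ and $C_2$ lie in the same class $V_\epsilon$ (that is a separate assumption in the setup, satisfiable for either type of $P/p$ except $O^+(2,3)$, since one only needs two distinct non-isotropic points of $P/p$ in the same square class -- orthogonality of $L_1/p$ and $L_2/p$ is never required). The condition on $\delta$ is equivalent to the statement that two \emph{orthogonal} non-isotropic points of $P/p$ lie in the same class, and it is used only in the existence of the clique $\{x,y,z\}$, specifically to produce a vertex $x\in V_\epsilon$ with $x^\perp\cap P=L_1$.

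Second, and this is where your construction actually fails: if you decompose a $7$-space as $\langle p_0,q_0\rangle\perp\langle p_1,q_1\rangle\perp\langle p_2,q_2\rangle\perp\langle w\rangle$ with $\{r_i,s_i\}$ an orthogonal basis of the hyperbolic line $\langle p_i,q_i\rangle$, then $\sigma(r_i)\sigma(s_i)\equiv -1$ modulo squares, so $\sigma(x)=\sigma(s_1+p_2)=\sigma(s_1)\equiv-\sigma(r_1)$. Since $r_1\in C_1\subseteq V_\epsilon$ is forced, $x$ lands in $V_{-\epsilon}$ whenever $-1$ is a non-square, i.e.\ for all $q\equiv 3\pmod 4$ -- including $q=3$, which is the main case of the theorem. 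Your extra direction $w$ is only invoked to repair $y$, and cannot repair $x$, because $s_1$ is pinned down as the orthogonal complement of $r_1$ inside a hyperbolic line. This is exactly the point where the paper departs from the unitary template: it abandons the three-hyperbolic-lines decomposition and instead uses $O^\delta(4,q)\cong O^\delta(2,q)\perp O^+(2,q)\cong\langle r_1,s_1\rangle\perp L$ to choose $s_1\in V_\epsilon$ freely and a hyperbolic line $L$ through $r_2$ (not through $r_1$ or $s_1$), then takes a further $O(3,q)$ subspace $Q\ni p$ orthogonal to $\langle r_1,s_1\rangle\perp L$ to house $y=r_0$ and $z=s_0+p_2$ with $r_0,s_0\in V_\epsilon$; the hypothesis on $\delta$ is precisely what makes this re-decomposition possible. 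Until you rework the clique construction along these lines (or otherwise arrange $\sigma(s_1)\in$ the $\epsilon$-class), the non-isomorphy argument is incomplete for $q\equiv 3\pmod 4$.
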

\begin{proof}
  Recall that all non-isotropic points of $L_i$ are in $V_\epsilon$.
  Then the argument for $\{ C_1, C_2 \}$ being a switching set is identical to the proof of Theorem~\ref{thm:herm_ni_coll}.
  
  Write $\Gamma = \Gamma_\epsilon$ for brevity.
  In $\Gamma$ the number $|\Gamma(x) \cap \Gamma(y) \cap \Gamma(z)|$
  is independent of the choice of $x, y, z$ for a clique $\{ x, y, z \}$ of size $3$. 
  As in the proof of Theorem~\ref{thm:herm_ni_ni}, it suffices to show that $|\Gamma(x) \cap \Gamma(y) \cap \Gamma(z)|$
  is one larger than $|\overline{\Gamma}(x) \cap \overline{\Gamma}(y) \cap \overline{\Gamma}(z)|$
  for at least one choice of $\{ x, y, z \}$. A sufficient condition for such $\{ x, y, z \}$ to satisfy this conclusion is the one stated in the beginning of the proof of Theorem~\ref{thm:herm_ni_ni}.
  
    All that is left to do is to show that $\{ x, y, z \}$ exists.
  It is well known that 
a point of $O^\delta(2, q) \cap V_\epsilon$
is orthogonal to a point of $O^\delta(2, q) \cap V_\epsilon$.

 Write $P=\langle p\rangle\perp\langle r_1,r_2\rangle$,
$\langle r_1,r_2\rangle\cong O^\delta(2,q)$,
$L_1=\langle p,r_1\rangle$, $r_1\in r_2^\perp$.
Since $p^\perp/p$ contains an $O(5,q)$ space which contains
$O^\delta(4,q)=O^\delta(2,q)\perp O^+(2,q)$ space,
there exist $s_1\in V_\epsilon$ and a line $L\ni r_2$ with
$L\cong O^+(2,q)$ such that
\[p^\perp\supseteq\langle p\rangle \perp\langle r_1,s_1\rangle\perp L.\]
Since $n\geq7$, there exists an $O(7,q)$ space
\[ Q\perp\langle r_1,s_1\rangle\perp L\text{ with }
p\in Q\cong O(3,q).\]
Let $r_0, s_0 \in V_\epsilon \cap Q$ such that $\langle r_0, s_0, p_0 \rangle$ is a basis of $Q$ with $s_0 \in r_0^\perp$ and $p \notin r_0^\perp, s_0^\perp$.
Let $p_2$ be an isotropic point of $L$, and set
  \begin{align*}
   &x = s_1 + p_2, && y = r_0, && z = s_0 + p_2.
  \end{align*}
  We have that $x^\perp \cap P = \langle p_0, r_1 \rangle$.
  Furthermore, $L_y = y^\perp \cap P = \langle r_1, r_2 \rangle$, 
  but $r_2 \notin z^\perp \cap P = L_z$, so $L_y \neq L_z$.
  Additionally, $L_y \cap L_x = \langle r_1 \rangle = L_z \cap L_x$, so 
  in particular $p$ is not in $L_y$ or $L_z$.
\end{proof}

We want to emphasize that the construction and the non-isomorphy result are independent of $q$,
but that in the general case $\Gamma_\epsilon$ and $\overline{\Gamma}_\epsilon$ are not strongly
regular. 
For $O^-(6, 3)$, the general non-isomorphy result of Theorem 10 does not apply.
However, we have verified by computer that the above construction actually
yields a strongly regular graph not isomorphic to $\Gamma_-$.
For $O^+(6, 3)$, the graph remains unchanged by the switching, but it is non-isomorphic to the block graph
of $\text{AG}(3, 3)$, another strongly regular graph with the same parameters.

\section{Future Work}

There are infinite families of strongly regular graphs on the non-isotropic points of
$U(n, \sqrt{q})$ \cite[C14]{Hubaut1975} and one type of non-isotropic points of $O(2d+1, q)$ \cite[\S7C]{Brouwer1984},
where adjacency is not defined by orthogonality, but lying on a tangent. We wonder if these
strongly regular graphs are determined by their spectrum or if an argument similar to the 
arguments in this paper can construct new graphs of the same type. The smallest case of $U(n, \sqrt{q})$, 
that is $q=4$, was covered in Section \ref{sec:polarity}.

In a celebrated result Keevash did not only show that designs exist, but that many designs exist \cite{Keevash2014}.
We are wondering if the probablility of the condition in Theorem \ref{thm:switching_design} is bounded away from $0$
for these designs.

\bibliographystyle{plain}


\end{document}